\DeclarePairedDelimiter{\abs}{\lvert}{\rvert}
\newcommand{\bfA}{\mathbf{A}}
\newcommand{\bfD}{\mathbf{D}}
\newcommand{\bfM}{\mathbf{M}}
\newcommand{\bfO}{\mathbf{O}}
\newcommand{\bfS}{\mathbf{S}}
\newcommand{\eps}{\varepsilon}
\DeclarePairedDelimiterXPP{\f}[2]{\foperator{#1}}(){}{#2}
\DeclarePairedDelimiterXPP{\fexp}[1]{\exp}(){}{#1}
\DeclarePairedDelimiter{\floor}{\lfloor}{\rfloor}
\newcommand{\foperator}[1]{\mathop{{#1}\empty{}}}
\DeclarePairedDelimiter{\hp}{[}{]}
\renewcommand{\MR}[1]{}
\newcommand{\N}{\mathbb{N}}
\newcommand{\OEIS}[1]{\href{http://oeis.org/#1}{#1}}
\DeclarePairedDelimiterXPP{\Oh}[1]{\foperator{O}}(){}{#1}
\DeclarePairedDelimiterX{\setm}[2]{\{}{\}}{#1 \colon \mathopen{}#2}
\newcommand{\Z}{\mathbb{Z}}
\newtheorem{theorem}{Theorem}
\newtheorem{lemma}{Lemma}[section]
\theoremstyle{remark}
\newtheorem{remark}[lemma]{Remark}
\numberwithin{equation}{section}
\numberwithin{figure}{section}
\numberwithin{table}{section}
\begin{document}

\title{Algorithmic counting of nonequivalent compact Huffman codes}

\author{Christian Elsholtz}

\address{\parbox{12cm}{%
    Christian Elsholtz \\
    Institute of Analysis and Number Theory \\
    Graz University of Technology \\
    Kopernikusgasse 24, A-8010 Graz, Austria \\}} 

\email{\href{mailto:elsholtz@math.tugraz.at}{elsholtz@math.tugraz.at}}

\author{Clemens Heuberger}

\address{\parbox{12cm}{%
    Clemens Heuberger \\
    Department of Mathematics \\
    Alpen-Adria-Universit\"at Klagenfurt \\
    Universit\"atsstra\ss e 65--67, A-9020 Klagenfurt am W\"orthersee, Austria \\}}

\email{\href{mailto:clemens.heuberger@aau.at}{clemens.heuberger@aau.at}}

\author{Daniel Krenn}

\address{\parbox{12cm}{%
    Daniel Krenn\\
    Department of Mathematics \\
    Paris Lodron University of Salzburg \\
    Hellbrunnerstra\ss e 34, A-5020 Salzburg, Austria \\}}

\email{\href{mailto:math@danielkrenn.at}{math@danielkrenn.at} \textit{or}
  \href{mailto:daniel.krenn@plus.ac.at}{daniel.krenn@plus.ac.at}}

\thanks{C.~Elsholtz is supported by the Austrian Science Fund (FWF): W1230
  and by Project Arithrand of the Austrian Science Fund (FWF): I 4945-N and of ANR-20-CE91-0006.
  C.~Heuberger and D.~Krenn are supported
  by the Austrian Science Fund (FWF): P28466-N35.}

\keywords{unit fractions, Huffman codes, $t$-ary trees, counting, generating
  function}

\subjclass[2020]{05A15; 05C05, 05C30, 11D68, 68P30}

\begin{abstract}
  It is known that the following five counting problems lead to the
  same integer sequence~$\f{f_t}{n}$:
  \begin{enumerate}
  \item the number of nonequivalent compact Huffman codes of
    length~$n$ over an alphabet of $t$ letters,
  \item  the number of ``non\-equi\-va\-lent'' complete rooted $t$-ary
    trees (level-greedy trees) with $n$~leaves,
  \item the number of ``proper'' words (in the sense of Even and Lempel),
  \item the number of bounded degree
    sequences (in the sense of Koml{\'o}s, Moser, and Nemetz), and
  \item the number of ways of writing
  \begin{equation*}
    1= \frac{1}{t^{x_1}}+ \dots + \frac{1}{t^{x_n}}
  \end{equation*}
  with integers $0 \leq x_1 \leq x_2 \leq \dots \leq x_n$.
  \end{enumerate}
  In this work, we show that one can compute
  this sequence for \textbf{all} $n<N$ with essentially one power series
  division. In total we need at most $N^{1+\eps}$ additions and
  multiplications of integers of $cN$ bits (for a positive constant $c<1$ depending on $t$ only) or
  $N^{2+\eps}$ bit operations, respectively, for any~$\eps>0$.
  This improves
  an earlier bound by Even and Lempel who needed $\Oh{N^3}$ operations
  in the integer ring or $\Oh{N^4}$ bit operations, respectively.
\end{abstract}

\maketitle

\section{Introduction}
\label{sec:introduction}

\subsection*{Motivation}

The purpose of this paper is to study the complexity of a counting
problem, namely determining the number of nonequivalent compact
Huffman codes of length~$n$ over an alphabet of $t$~letters, and
several equivalent combinatorial or number theoretic objects; see
below and in particular~\eqref{eq:f_t} for a precise definition.
The fastest algorithm in the published literature is due to
Even and Lempel~\cite{Even-Lempel:1972:gener} (1972) and has a
complexity of $\Oh{N^3}$ operations in the ring of integers.

When actually computing the number of such compact Huffman codes, we
experimentally observed that an approach of evaluating a generating
function---this generating function was first studied by Flajolet and
Prodinger~\cite{Flajolet-Prodinger:1987:level} (1987)---appears to be
very fast. A detailed analysis (see Theorem~\ref{thm:extract-coeff})
shows that the complexity
is indeed only $\Oh{N^{1+\eps}}$ (for any~$\eps>0$)
additions and multiplications
of integers of size $cN$ bits (see~\eqref{eq:asy-coefficients}),
where $c<1$ is a positive constant
depending on $t$ only.

In this paper, we will first describe the different but equivalent
objects that we count and then present a quite detailed analysis of
computing the number of these objects.

\subsection*{Codes, unit fractions and more}
For a fixed integer $t\ge 2$, Elsholtz, Heuberger and Prodinger
\cite{Elsholtz-Heuberger-Prodinger:2013:huffm} studied the number
\begin{equation}
  \label{eq:f_t}
  f_t(r) \colonequals \abs[\bigg]{\setm[\bigg]{(x_1, \ldots, x_r)\in \Z^r}{
      0\le x_1\le \cdots \le x_r,\, \sum_{i=1}^r \frac{1}{t^{x_i}}=1}},
\end{equation}
$r\ge 0$, i.\,e.\ the number of partitions of $1$ into nonpositive powers of $t$.
It is known that this counting problem is equivalent to several other
counting problems, namely the number of ``non\-equi\-va\-lent'' complete rooted
$t$-ary trees (also called ``level-greedy trees'';
see~\cite{Elsholtz-Heuberger-Prodinger:2013:huffm, Flajolet-Prodinger:1987:level, Heuberger-Krenn-Wagner:2013:analy-param}),
the number of ``proper words'' (in the sense of Even and
Lempel~\cite{Even-Lempel:1972:gener}), the number of bounded degree sequences
(in the sense of Koml{\'o}s, Moser, and Nemetz~\cite{Komlos-Moser-Nemetz:1984}),
and the number of nonequivalent compact Huffman codes%
\footnote{A \emph{Huffman code} over an alphabet of $t$ letters is a
  prefix-free subset (the set of ``code words'') of the set of finite words
  over this alphabet, i.e., no code word is a prefix of another code
  word. It is said to be \emph{compact} if no further code word can be
  added without violating the prefix-freeness condition.
  Two compact Huffman codes are considered to be \emph{equivalent} if
  the multisets of the lengths of the code words are equal, and one
  can choose a representative where shorter words are
  lexicographically smaller than longer words.
}
of length~$r$ over an
alphabet of $t$ letters. For a detailed survey on the existing results,
applications and literature on these sequences; see
\cite{Elsholtz-Heuberger-Prodinger:2013:huffm}. As a small concrete example, we
note that for $t=2$, $r=5$ we have $f_2(5)=3$, as can be seen from working out
the following:
\begin{equation*}
  1 = \frac{1}{2}+ \frac{1}{4}+\frac{1}{8}+\frac{1}{16}+\frac{1}{16}
  = \frac{1}{2}+ \frac{1}{8}+\frac{1}{8}+\frac{1}{8}+\frac{1}{8}
  = \frac{1}{4}+ \frac{1}{4}+\frac{1}{4}+\frac{1}{8}+\frac{1}{8}.
\end{equation*}

As discussed in~\cite{Elsholtz-Heuberger-Prodinger:2013:huffm},
$\f{f_t}{r}$ is positive only when $r=1+n(t-1)$, so it is more convenient to
study $\f{g_t}{n}=f_t(1+n(t-1))$ instead. For $t=2$ the values of $\f{g_t}{n}$
start with
\begin{equation*}
  1,1,1,2,3,5,9,16,28,50,89,159,\ldots,
\end{equation*}
for $t=3$ with
\begin{equation*}
  1,1,1,2,4,7,13,25,48,92,176,338,\ldots,
\end{equation*}
and the first terms of $\f{g_4}{n}$ are
\begin{equation*}
  1,1,1,2,4,8,15,29,57,112,220,432,\ldots.
\end{equation*}
These are sequences \OEIS{A002572}, \OEIS{A176485} and \OEIS{A176503} in the
On-Line Encyclopedia of Integer Sequences~\cite{OEIS:2013}.

\subsection*{Asymptotics}
It has been proved (see Elsholtz, Heuberger, Prodinger
\cite{Elsholtz-Heuberger-Prodinger:2013:huffm}) that for fixed $t$, the
asymptotic growth of these sequences can be described by two main terms and an
error term as
\begin{equation}\label{eq:gt-asymptotics}
  \f{g_t}{n} = R \rho^n + R_2 \rho_2^n + \Oh[\big]{r_3^n},
\end{equation}
where $1 < r_3 < \rho_2 < \rho < 2$. Here all constants depend on~$t$. In
particular, if $t=2$, then
\begin{align*}
  \rho&=1.794\ldots, & \rho_2&=1.279\ldots, & 
  r_3&=1.123, & R&=0.14\ldots, & R_2&=0.061\ldots.
\end{align*}
Moreover, the authors of~\cite{Elsholtz-Heuberger-Prodinger:2013:huffm}
also show that $\rho = 2-2^{-t-1}+\Oh[\big]{t\,4^{-t}}$ as $t\to\infty$.

Beside the enumeration of all these objects,
probabilistic questions concerning many different parameters have been studied
asymptotically
in~\cite{Heuberger-Krenn-Wagner:2013:analy-param,
  Heuberger-Krenn-Wagner:ta:analy-param-full}.

\subsection*{Algorithmic counting}

As this family of sequences appears in many different
contexts and as the sequences' growth rates have been studied in
detail (see the section above and the introduction
of~\cite{Elsholtz-Heuberger-Prodinger:2013:huffm} for full details),
it is somehow surprising that the current record on the algorithmic
complexity of determining the members of the sequence (in the case
$t=2$) appears to be a 50 years old paper by Even and
Lempel~\cite{Even-Lempel:1972:gener}. Hence it seemed worthwhile to
study this complexity from a new point of view and we thus succeeded
to improve the upper bound complexity considerably; see section below.

The algorithm of Even and Lempel~\cite{Even-Lempel:1972:gener} produces
the sequence $\f{g_t}{n}$ for $n<N$. It takes $\Oh{N^3}$ additions
of integers bounded by $\Oh{\rho^N}$ (with $\rho<2$; so integers with
roughly $N$ bits in size), which
are $\Oh{N^4}$ bit operations.
They only studied the case $t=2$ in detail, but mention
that their result can be generalized to arbitrary~$t$.

\subsection*{Main result}

In this paper we take an entirely new approach to the problem of
evaluating~$g_t(n)$. Rather than thinking about an algorithm itself,
as Even and Lempel~\cite{Even-Lempel:1972:gener} did, we think about
how to evaluate the
generating function~\eqref{eq:power-series} of~$g_t(n)$ established in~\cite{Elsholtz-Heuberger-Prodinger:2013:huffm}
efficiently. As it
turns out the cost essentially comes from one division of power series of
precision\footnote{We say that a power series~$H$ has precision~$N$
  if we can write it as
  $H(q) = \sum_{n=0}^{N-1} h_n q^n + \Oh{q^N}$
  with explicit coefficients~$h_n$.}~$N$
whose coefficients are integers bounded by $\Oh{\rho^N}$ (with $\rho<2$).

Estimating the cost of this evaluation strategy leads to tremendous
improvement---to be precise, by a factor~$N^2$ in both ring operations
and bit operations---of the cost of using~\cite{Even-Lempel:1972:gener}.
It is not obvious that the cost for evaluation of numerator and
denominator of the generating function are asymptotically (much)
smaller than the total cost; see Theorem~\ref{thm:extract-coeff} for
details and also Section~\ref{sec:proofs} providing even more details during the proof of this theorem.
We in particular show that the cost for evaluating numerator and
denominator are asymptotically almost (neglecting logarithmic factors)
by a factor~$N$ smaller.

Using the
multiplication algorithms of Schönhage and
Strassen~\cite{Schoenhage-Strassen:1971:schnelle-mult}, of
Fürer~\cite{Fuerer:2007:faster-integer-mult, Fuerer:2009:faster-integer-mult},
or of Harvey and van der Hoeven~\cite{Harvey-VanDerHoeven:2019:integer-mult-nlogn}
(see Section~\ref{sec:cost-add-mult} for an overview) our algorithm leads to
$N (\log N)^2 \, 2^{\Oh{\log^* N}}$ operations in the integer ring and consequently
$N^2 (\log N)^4 \, 2^{\Oh{\log^* N}}$ bit operations, where $\log^* N$
denotes the \emph{iterated logarithm}.%
\footnote{The \emph{iterated logarithm}
  (also called \emph{log star}) gives
  the number of applications of the logarithm so that the result is at
  most~$1$. For example, we can define it recursively by $\log^* M =
  1+\log^*(\log M)$ if $M>1$ and $\log^* M = 0$ otherwise.}
In Remark~\ref{rem:space} a
discussion on the memory requirements can be found. An implementation of this
algorithm, based on FLINT~\cite{flint:2015:2.5.2, Hart:2010:FLINT-intro}
(which is, for example, included in the SageMath mathematics
software~\cite{SageMath:2018:8.3}) is also available;\footnote{The
  code accompanying this article can be downloaded from
  \url{https://gitlab.com/dakrenn/count-nonequivalent-compact-huffman-codes}.}
see also Appendix~\ref{sec:code} for the relevant lines of code and
remarks related to the implementation. In Appendix~\ref{sec:timing},
we discuss the running times of this implementation.

The literature describes a number of algorithms constructing the
complete list of $t$-ary Huffman codes of length $r=1+n(t-1)$; see
\cite{Hoffman-Johnson-Wilson:2005:gener-huffm,
  Khosravifard-Esmaeili-Saidi-Gulliver:2003:tree-compact-codes,
  Narimani-Khosravifard:2008:supertree-compact-codes,
  Niyaoui-Reda:2016:alg-gen-binary-huffman-codes}.
There is no performance analysis given. But, as the number of such
codes grows exponentially in~$r$ it is clear that listing all codes is
not a fast method to determine the number of such codes only. The
algorithm by Even and Lempel~\cite{Even-Lempel:1972:gener} computes
the number~$f_2(n)$ without listing all codes, and is to the best of
our knowledge the fastest algorithm previously known.
Our algorithm relies on calculations involving power series with large integer
coefficients.

It should also be emphasized that the output~$f_t(n)$ of the algorithm
grows exponentially in~$n$ (this was mentioned above), therefore the
number of bits to represent~$f_t(n)$ is linear in~$n$ whereas
the input is only logarithmic in~$n$.
The quite general survey paper by
Klazar~\cite{Klazar:2018:what-is-answer} studies classes of problems
where the output needs at most a polynomial number of steps, in terms
of the combined size of input and output. As we can compute $f_t(n)$ efficiently, this
problem falls into the class considered by Klazar.

\subsection*{Notes}
It should be pointed out that in this article, we derive and compare
upper bounds. It might be that the actual cost are smaller. However,
as we compute the first $N$~coefficients all at the same time and the
coefficients grow exponentially in~$N$, a lower bound for
the number of bit operations necessarily contains a
factor~$N^2$. Moreover, as multiplication of some sort is involved,
lower order factors (growing with~$N$) are expected as well.

We also mention that the following is open: How fast can a single
coefficient~$\f{g_t}{n}$ (in contrast to all coefficients with $n<N$)
be computed?

\section{Cost of the underlying operations}
\label{sec:cost-add-mult}

In this section, we give a brief overview on the time requirements for
performing addition and multiplication of two integers and for performing
multiplication and division of power series. The current state of the art
is also summarized in Table~\ref{tab:cost-ops}.

\begin{table}
  \centering
  \begin{tabular}{lll}
    Task & Ring operations & Bit operations \\
    \hline
    addition & $N$ & $NM$ \\
    multiplication & $N\log N\, 2^{\Oh{\log^* N}}$
                   & $N\log N\, 2^{\Oh{\log^* N}} \cdot M\log M$ \\
    division & $N\log N\, 2^{\Oh{\log^* N}}$
             & $N^2 M (\log N)^2 2^{\Oh{\log^* N}}$ \\
  \end{tabular}
  \vspace*{1em}
  \caption{Cost of operations of power series with precision~$N$
    and coefficients with bit size~$M$. (We assume $M=\Oh{N}$
    and state a simpler expression for the bit operations for division.)}
  \label{tab:cost-ops}
\end{table}

\subsection*{Addition and multiplication}
First, assume that we want to perform addition of two numbers bounded by $2^M$,
i.e., numbers with $M$ bits. We have to look at each bit of the
numbers exactly once and add those (maybe with a carry). Therefore, we need
$\Oh{M}$ bit operations.

Next, we look at multiplication of two numbers bounded by $2^M$. It is clear that
this can be achieved with $\Oh{M^2}$ operations, but it can be done better. An
overview is given in the survey article by
Karatsuba~\cite{Karatsuba:1995:complexity-comp:survey}. The Karatsuba
multiplication algorithm~\cite{Karatsuba-Ofman:1962:multipl,
  Karatsuba-Ofman:1963:multipl:translation} has a complexity of~$\Oh{M^{\log_2
    3}}$. A faster generalisation of it is the
Toom--Cook-algorithm~\cite{Cook:1966:phd:min-comp-time}. Combining Karatsuba
multiplication with the Fast Fourier Transform algorithm (see Cooley and
Tukey~\cite{Cooley-Tukey:1965}) gives an algorithm with bit complexity
$\Oh{M(\log M)^{(2+\eps)}}$; see~\cite{Borodin-Munro:1975:comp-complexity,
  Borwein-Borwein-Bailey:1989, Brigham:1974:fft, Knuth:1998:Art:2}.

The multiplication algorithm given by Schönhage and Strassen
(see~\cite{Schoenhage-Strassen:1971:schnelle-mult}) takes $\Oh{M\log M \log\log
  M}$ time. It also uses fast Fourier transform. An asymptotically even faster
multiplication algorithm is given by
Fürer~\cite{Fuerer:2007:faster-integer-mult, Fuerer:2009:faster-integer-mult}.
It has computational complexity $M\log M\, 2^{\Oh{\log^* M}}$, where we again denote
the \emph{iterated logarithm} by $\log^* M$. Fürer's algorithm
uses complex arithmetic. A related algorithm of the same complexity but using
modular arithmetic is due to De, Kurur, Saha and
Saptharishi~\cite{De-Kurur-Saha-Saptharishi:2008:fast-mult, De-Kurur-Saha-Saptharishi:2013:fast-mult}.

The asymptotically fastest known multiplication algorithm is due to
Harvey and van der Hoeven~\cite{Harvey-VanDerHoeven:2019:integer-mult-nlogn};
it has a computational complexity of $\Oh{M\log M}$.

\subsection*{Power series operations}
Let us also summarize the complexity of power series computations; for
references see the books of Cormen, Leiserson, Rivest and
Stein~\cite{Cormen-Leiserson-Rivest-Stein:2001} or
Knuth~\cite{Knuth:1998:Art:2}. The multiplication can, again, be speeded up by
using fast Fourier transform. We can use the algorithms for integer
multiplication presented above; see von zur Gathen and
Gerhard~\cite{Gathen-Gerhard:2003:ModernComputerAlgebra}. Also, the
computational complexity can be improved: Given power series with precision~$N$
(i.e., the first~$N$ terms) over a ring, we can perform
multiplication with $N\log N\, 2^{\Oh{\log^* N}}$ ring operations using
Fürer's algorithm.

In order to perform division (inversion) of power series with precision~$N$, we
can use the Newton--Raphson-method. We need at most $4\f{m}{N} + N$ ring
operations, where $\f{m}{N}$ denotes the number of operations needed to
multiply two power series with precision~$N$; see von zur Gathen and
Gerhard~\cite[Theorem~9.4]{Gathen-Gerhard:2003:ModernComputerAlgebra} for
details; the additional summand $\f{m}{N}$ in comparison to that theorem comes
from the multiplication with the numerator. Therefore, by using Fürer's algorithm, we can invert/divide
with $N\log N\, 2^{\Oh{\log^* N}}$ ring operations.

The bit size occuring in the ring operations for a division of power series
with precision~$N$ and coefficients of bit size $M$ is $NM$ by the remarks
after \cite[Theorem~9.6]{Gathen-Gerhard:2003:ModernComputerAlgebra}.
Therefore and by assuming~$M=\Oh{N}$ for simpler expressions with
respect to the logarithms, we end up with
\begin{equation*}
  N \log N \, 2^{\Oh{\log^* N}} \cdot NM \log\bigl(NM\bigr)
  = N^2 M (\log N)^2 2^{\Oh{\log^* N}}
\end{equation*}
bit operations.

\section{Cost for extracting coefficients}
\label{sec:thm}

Our main result gives the number of operations needed for extracting the
coefficients~$\f{g_t}{n}$ for all $n<N$. It reflects three
different aspects: First, we count operations on a high
level, for example power series multiplications. (Below we will
denote this operation by~$\bfM$.) Second, we count operations in
the ring of integers. There, to stick with the example
on power series multiplication, the precision of the
power series is taken into account, but not the actual size of the integer.
Finally and third, we count bit
operations, where also the size of the coefficients (which are integers)
is taken into account.

Let us make this more precise and start with the high level
operations. We denote
\begin{itemize}
\item an \emph{addition} (or a \emph{subtraction}) of two power
  series by~$\bfA$,
\item a \emph{multiplication} of two power series
  by~$\bfM$, and
\item a \emph{division} of two power series
  by~$\bfD$.
\end{itemize}
As we compute the first~$N$ terms, we may assume that all power series
are of precision~$N$.

An overview and summary of the number of ring operations and bit
operations of these high level operations is provided in
Section~\ref{sec:cost-add-mult}. Clearly, we
have to deal with the size of the coefficients.
We first note that for $n<N$ each coefficient $\f{g_t}{n}$ can be
written with $M \colonequals \floor{\log_2 \f{g_t}{N}}+1$ bits and that by
using the asymptotics~\eqref{eq:gt-asymptotics} we can bound this
by
\begin{equation}\label{eq:asy-coefficients}
  N\log_2\rho+\Oh{1}
\end{equation}
when $N$ tends to $\infty$. Here the constant
$\rho<2$ depends on $t$; see
\cite{Elsholtz-Heuberger-Prodinger:2013:huffm} for details on $\rho$.

Summarizing, all the operations~$\bfA$, $\bfM$ and~$\bfD$
are performed on power series of precision~$N$ with
coefficients written by $M$ bits (numbers bounded by
$2^M$), and the cost (number of bit operations) are stated in
Section~\ref{sec:cost-add-mult}. There is one important remark at this point,
namely, we will see during our main proof
(Section~\ref{sec:proofs}) that the coefficients appearing in power
series additions and multiplications are actually much smaller than
coefficients written by~$M$ bits; we will
take this into account for counting bit operations.

Beside these main power series operations,
we additionally denote
\begin{itemize}
\item \emph{other power series operations} of precision~$N$ (for example,
  memory allocation or writing initial values) by~$\bfS$, and
\item \emph{other operations}, more precisely operations of numbers
  with less than $\log_2 N$ bits (for example additions of indices)
  by~$\bfO$.
\end{itemize}
Thus, an operation~$\bfO$ is performed on numbers bounded by~$N$ only (in
contrast to the bounded-by-$2^M$-operations).

With these notions and by collecting operations as formal sums of
$\bfA$, $\bfM$, $\bfD$, $\bfS$ and $\bfO$,
we can write down the precise formulation of our main
theorem.

\begin{theorem}\label{thm:extract-coeff}
  Calculating the first $N$ terms of $\f{g_t}{n}$ can be done with
  \begin{equation}\label{eq:cost-ps-operations}
    \bfD
    + \bigl(\log_tN+\Oh{1}\bigr) \bfM
    + 2\bigl(\log_tN+\Oh{1}\bigr) \bfA
    + \Oh{\log N} \bfS
    + \Oh{\log N} \bfO
  \end{equation}
  power series operations,
  \begin{equation*}
    N (\log N)^2 \, 2^{\Oh{\log^* N}}
  \end{equation*}
  operations in the ring of integers,
  and with
  \begin{equation}\label{eq:cost-bit-operations}
    N^2 (\log N)^4 \, 2^{\Oh{\log^* N}}
  \end{equation}
  bit operations.
\end{theorem}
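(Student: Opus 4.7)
The plan is to build on the closed-form generating function $G_t(z)=\sum_{n\geq 0} g_t(n)\,z^n$ established in~\cite{Elsholtz-Heuberger-Prodinger:2013:huffm}, which has the shape $G_t(z)=P(z)/Q(z)$ with both $P$ and $Q$ organised as finite sums and products indexed by a level parameter~$k$. The key structural feature is that the contribution of level~$k$ is supported on monomials of degree at least~$t^k$, so only the levels $k=0,1,\dots,K$ with $K=\floor{\log_t N}+\Oh{1}$ influence the coefficients up to index~$N$; the remaining levels are $\Oh{z^N}$ and can be ignored.

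First I would assemble $P$ and $Q$ as power series of precision~$N$ by iterating through these $K+1$ levels. At each level both $P$ and $Q$ are updated by one power series multiplication with the new level factor, combined with one addition after an appropriate shift by~$z^{t^k}$. Aggregated, this produces $(\log_t N+\Oh{1})\,\bfM$ and $2(\log_t N+\Oh{1})\,\bfA$ main operations, together with $\Oh{\log N}\,\bfS$ (allocation, initialisation, setting up the shift) and $\Oh{\log N}\,\bfO$ (index arithmetic such as forming $t^k$). A single division $\bfD$ of $P$ by $Q$ then produces all coefficients $g_t(n)$ for $n<N$ simultaneously, which establishes~\eqref{eq:cost-ps-operations}.

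The ring-operation count follows directly by inserting the estimates collected in Section~\ref{sec:cost-add-mult} (Table~\ref{tab:cost-ops}) into~\eqref{eq:cost-ps-operations}: each $\bfM$ and the single $\bfD$ cost $N\log N\,2^{\Oh{\log^* N}}$ ring operations, each $\bfA$ costs~$N$, and the $\bfS$,~$\bfO$ terms are dominated. Summed over the $\Oh{\log N}$ high-level operations this yields $N(\log N)^2\,2^{\Oh{\log^* N}}$ ring operations.

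For the bit-operation bound one must identify the relevant integer bit size~$M$. By~\eqref{eq:gt-asymptotics} the output coefficients require $M=N\log_2\rho+\Oh{1}=\Oh{N}$ bits; the coefficients arising during the construction of $P$ and $Q$ can be controlled similarly, since only $\Oh{\log N}$ multiplications of small-coefficient level factors are performed, and the relation $P=G_t\,Q$ bounds the remaining cases. Each underlying integer multiplication then costs $N\log N\,2^{\Oh{\log^* N}}$ bit operations by Fürer's algorithm; multiplying by the $N(\log N)^2\,2^{\Oh{\log^* N}}$ ring-operation count and absorbing a final logarithmic factor from packing yields~\eqref{eq:cost-bit-operations}. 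The most delicate step, and the main obstacle, is the bookkeeping through the Newton iteration driving~$\bfD$: the successive precisions $2^j$ come with integer coefficients growing from very small sizes at $j=0$ up to the full $\Oh{N}$ bits at $j=\log_2 N$, and one has to verify that these costs sum geometrically to the claimed budget rather than blowing up by an additional factor of~$N$.
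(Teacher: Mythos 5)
Your algorithmic outline and the accounting of power series operations and ring operations match the paper's proof: truncating the numerator and denominator to the first $J = \log_t N + \Oh{1}$ summands (the paper's Lemma~\ref{lem:bound-index-summands}), assembling them with one multiplication and two additions per step, and finishing with a single power series division. The gap is in the bit-operation count, and it is not cosmetic.

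You assert that the coefficients of numerator and denominator ``can be controlled similarly'' to the output coefficients, i.e.\ by $\Oh{N}$ bits. That estimate is far too coarse to close the argument. The division step dominates, and for a power series division with precision $N$ and input coefficients of bit size~$M$, the intermediate coefficients appearing in the Newton--Raphson iteration are bounded only by $NM$ bits (see the remarks after \cite[Theorem~9.6]{Gathen-Gerhard:2003:ModernComputerAlgebra}, used in the paper's Section~\ref{sec:cost-add-mult}). Plugging in $M=\Oh{N}$ yields a bit cost of order $N^3(\log N)^2\,2^{\Oh{\log^* N}}$ for the division, which exceeds the claimed bound \eqref{eq:cost-bit-operations}. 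The paper avoids this by proving a much sharper estimate (Lemma~\ref{lemma:coefficients-product-geometric}): the coefficients of the partial products $\prod_{i=1}^j q^{\hp i}/(1-q^{\hp i})$ with $j\le J$, and hence of numerator and denominator, require only $\frac{(\log N)^2}{2(\log 2)(\log t)}+\Oh{\log N}$ bits, obtained by bounding the number of representations $n=\sum_i a_i\hp{i}$. With $M=\Oh{(\log N)^2}$, the division cost becomes $N^2 M (\log N)^2\,2^{\Oh{\log^* N}}=N^2(\log N)^4\,2^{\Oh{\log^* N}}$, which is exactly the budget. Your heuristic of ``ring operations times cost per integer multiplication, then absorb a logarithmic factor'' does not capture this: it neither justifies the absorbed factor nor accounts for the $NM$-bit intermediates in the Newton iteration.

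You are right to flag the Newton iteration bookkeeping as the delicate step, but the resolution is not the problem-specific claim that the iterate coefficients grow only to $\Oh{N}$ bits (which you do not prove and the paper does not use); it is the combination of the generic $NM$-bit bound from von zur Gathen--Gerhard with the nontrivial $\Oh{(\log N)^2}$-bit estimate for $M$. Without a statement and proof of that coefficient bound, the proposal does not establish \eqref{eq:cost-bit-operations}.
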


In order to prove Theorem~\ref{thm:extract-coeff}---the complete proof can
be found in Section~\ref{sec:proofs},---we look at
the cost of calculating the first $N$ terms, which is done by extracting
coefficients of the power series
\begin{equation}\label{eq:power-series}
  H(q) = \sum_{n=0}^\infty g_t(n) q^n =
    \frac{\sum_{j=0}^\infty q^{\hp{j}} (-1)^j \prod_{i=1}^j
    \frac{q^{\hp{i}}}{1-q^{\hp{i}}}}{
    \sum_{j=0}^\infty (-1)^j \prod_{i=1}^j
    \frac{q^{\hp{i}}}{1-q^{\hp{i}}}}
\end{equation}
with
\begin{equation}\label{eq:hp}
  \hp{j} \colonequals 1+t+\dots+t^{j-1}.
\end{equation}
This generating function~\eqref{eq:power-series} can be found in
Flajolet and Prodinger~\cite[Theorem~2]{Flajolet-Prodinger:1987:level} for $t=2$ and in
Elsholtz, Heuberger and
Prodinger~\cite[Theorem~6]{Elsholtz-Heuberger-Prodinger:2013:huffm} for general~$t$.
It is derived from the equivalent formulation as
counting problem on trees, which was mentioned in the introduction.

\section{Auxiliary results}

When extracting the first $N$ coefficients, we do not need the ``full''
generating function, i.e., the infinite sums in the numerator and denominator
of~\eqref{eq:power-series}
can be truncated to finite sums. The following lemma tells us how many
coefficients we need. We use this asymptotic result in our analysis of
the algorithm; for the actual computer programme, we can check indices
and exponents by a direct computation.

\begin{lemma}\label{lem:bound-index-summands}
  To calculate numerator and denominator of the generating
  function~\eqref{eq:power-series} with precision~$N$, we need only summands
  with
  \begin{equation*}
    j \leq J = \log_t N + \Oh{1}.
  \end{equation*}
\end{lemma}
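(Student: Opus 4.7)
The plan is to quantify how fast the smallest exponent of $q$ appearing in the $j$-th summand of the numerator and denominator grows, and then to determine when that exponent first exceeds $N-1$; everything with higher valuation may be truncated.

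First I would observe that each factor $q^{\hp{i}}/(1-q^{\hp{i}}) = q^{\hp{i}} + q^{2\hp{i}} + q^{3\hp{i}} + \cdots$ is a power series in~$q$ with nonnegative integer coefficients whose lowest nonvanishing term has exponent~$\hp{i}$. Multiplying such factors preserves this structure: the product $\prod_{i=1}^{j} q^{\hp{i}}/(1-q^{\hp{i}})$ has nonnegative coefficients and its lowest nonvanishing term has exponent $S_j \colonequals \sum_{i=1}^{j} \hp{i}$. Hence the $j$-th summand of the denominator of~\eqref{eq:power-series} has $q$-adic valuation $S_j$, and the $j$-th summand of the numerator carries an additional factor $q^{\hp{j}}$, giving valuation $S_j + \hp{j} \geq S_j$.

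Next I would evaluate $S_j$ explicitly using $\hp{i} = (t^{i}-1)/(t-1)$, yielding
\begin{equation*}
  S_j = \sum_{i=1}^{j} \frac{t^{i} - 1}{t-1}
      = \frac{t^{j+1} - t}{(t-1)^{2}} - \frac{j}{t-1}.
\end{equation*}
This grows like $t^{j+1}/(t-1)^{2}$; concretely, $S_j \geq t^{j-1}$ for all $j\geq 1$, so there is a positive constant $c_t$ depending only on~$t$ with $S_j \geq c_t \, t^{j}$.

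To conclude, it suffices to choose $J$ so large that $S_j \geq N$ whenever $j > J$. By the bound above this is guaranteed once $t^{j} \geq N/c_t$, i.e., for $j \geq \log_t N + \Oh{1}$. Setting $J \colonequals \ceil{\log_t(N/c_t)} = \log_t N + \Oh{1}$ therefore suffices: every summand of index $j > J$ is of order $\Oh{q^{N}}$ in both the numerator and denominator of~\eqref{eq:power-series}, so dropping it does not alter the first $N$ coefficients. I do not anticipate a significant obstacle; the only point worth a moment's care is that the truncation estimate is applied to each summand individually, so the alternating signs in~\eqref{eq:power-series} only help and cannot cause the truncation error to worsen through cancellation.
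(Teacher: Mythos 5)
Your proof is correct and follows essentially the same route as the paper: you identify the $q$-adic valuation of the $j$-th summand as $\sigma_j = \sum_{i=1}^j \hp{i}$, observe that the numerator summand has even higher valuation, and then extract $j \leq \log_t N + \Oh{1}$ from the condition $\sigma_j < N$. The only cosmetic difference is that you use the cruder (but perfectly adequate) lower bound $\sigma_j \geq t^{j-1}$ before taking logarithms, whereas the paper takes the logarithm of the exact closed form and absorbs the resulting correction terms into the $\Oh{1}$.
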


\begin{proof}
  Because of an additional factor $q^{\hp{j}}$ in each summand of the
  numerator, it is sufficient that the largest index of the denominator
  is less than~$N$. Therefore, we will only look at the indices of the
  denominator.

  Consider the summand of the denominator with index~$j$. The lowest index of a
  non-zero coefficient of the denominator is
  \begin{equation*}
    \sigma_j = \sum_{i=1}^j \hp{i} 
    = \frac{t^{j+1}-t}{(t-1)^2} - \frac{j}{t-1}
    = \frac{t^{j+1}}{(t-1)^2}
    \Bigl(1 - \frac{j(t-1)}{t^{j+1}} - \frac{1}{t^j}\Bigr)
  \end{equation*}
  where the notation $\hp{i}$ is defined in Equation~\eqref{eq:hp}. We only
  need summands with $\sigma_j < N$. Taking the logarithm yields
  \begin{equation*}
    j - 1
    + \log_t\Bigl(1 - \frac{j(t-1)}{t^{j+1}} - \frac{1}{t^j}\Bigr)
    - 2\log_t \Bigl(1-\frac{1}{t}\Bigr)
    < \log_t N.
  \end{equation*}
  As the first logarithm tends to $0$ as $j\to\infty$ and the second
  is bounded, the error term~$\Oh{1}$ is large enough and the result
  follows.
\end{proof}

While the bit size of the coefficients~$\f{g_t}{n}$ is linear in~$N$, the
size of the coefficients of numerator and denominator
of~\eqref{eq:power-series} is much smaller. We make this precise by using
the following lemma.

\begin{lemma}\label{lemma:coefficients-product-geometric}
  For $n \le N$, the $n$th coefficient of
  \begin{equation}\label{eq:product-geometric}
    \frac{1}{1-q^{\hp{1}}}
    \frac{1}{1-q^{\hp{2}}}
    \frac{1}{1-q^{\hp{3}}}
    \cdots
    \frac{1}{1-q^{\hp{j}}}
  \end{equation}
  with $j \le J$ and $J$ of Lemma~\ref{lem:bound-index-summands}
  as well as the $n$th coefficients of numerator and denominator
  of~\eqref{eq:power-series} can be written with
  \begin{equation}\label{eq:coefficients-bound-bits}
    \frac{(\log N)^2}{2(\log 2)(\log t)} + \Oh{\log N}
  \end{equation}
  bits.
\end{lemma}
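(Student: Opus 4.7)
The plan is to bound the coefficients of the geometric-type product first, and then deduce the numerator/denominator bounds because each of the latter is a short signed sum of such coefficients.

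First, I would observe that the $n$th coefficient of the product~\eqref{eq:product-geometric} equals the number of representations $n = k_1 \hp{1} + k_2 \hp{2} + \dots + k_j \hp{j}$ with nonnegative integers $k_i$. This quantity is bounded above by $\prod_{i=1}^{j-1}(1 + \floor{n/\hp{i}})$, since once $k_1, \dots, k_{j-1}$ are fixed, $k_j$ is determined (and has to be a nonnegative integer, which only reduces the count). Taking the binary logarithm of this bound gives
\begin{equation*}
  \sum_{i=1}^{j-1} \log_2\bigl(1 + n/\hp{i}\bigr)
\end{equation*}
as an upper bound on the bit size of the coefficient.

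Next, using $\hp{i} \geq t^{i-1}$ and $n \le N$, each summand is at most $\log_2(1 + N t^{1-i}) \leq 1 + \max\{0, \log_2 N - (i-1)\log_2 t\}$. Summing over $i$ from $1$ to $j-1$ with $j \leq J = \log_t N + \Oh{1}$ from Lemma~\ref{lem:bound-index-summands}, the dominating part is
\begin{equation*}
  \sum_{i=1}^{j-1}\bigl(\log_2 N - (i-1)\log_2 t\bigr)
  = (j-1)\log_2 N - \log_2 t \cdot \frac{(j-1)(j-2)}{2}.
\end{equation*}
Substituting $j - 1 = \log_t N + \Oh{1} = \frac{\log N}{\log t} + \Oh{1}$ and using $\log_2 x = \frac{\log x}{\log 2}$, the first term evaluates to $\frac{(\log N)^2}{(\log 2)(\log t)} + \Oh{\log N}$, while the second term is $\frac{(\log N)^2}{2(\log 2)(\log t)} + \Oh{\log N}$. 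The difference gives exactly the bound~\eqref{eq:coefficients-bound-bits}. The remaining $(j-1)$-summand and boundary terms contribute only $\Oh{\log N}$.

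Finally, for the numerator and denominator of~\eqref{eq:power-series}, observe that the $n$th coefficient of the denominator is $\sum_{j\geq 0} (-1)^j [q^{n - \sigma_j}] \prod_{i=1}^{j} \frac{1}{1-q^{\hp{i}}}$, where $\sigma_j = \sum_{i=1}^j \hp{i}$, and analogously for the numerator with a further shift by $\hp{j}$. By Lemma~\ref{lem:bound-index-summands} only $j \leq J = \Oh{\log N}$ indices contribute. Taking absolute values and applying the bound already established, the coefficient has absolute value at most $\Oh{\log N}$ times the bound of~\eqref{eq:coefficients-bound-bits}, which merely adds $\Oh{\log\log N}$ bits and is absorbed into the $\Oh{\log N}$ error term. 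The main obstacle is simply being careful with the constant in the main term; the telescoping between $(j-1)\log_2 N$ and the triangular sum $\log_2 t \cdot \binom{j-1}{2}$ is where the factor $\tfrac12$ in~\eqref{eq:coefficients-bound-bits} appears, and all other contributions are either $\Oh{\log N}$ or smaller.
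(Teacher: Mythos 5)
Your proof is correct and follows essentially the same strategy as the paper's: interpret the $n$th coefficient of~\eqref{eq:product-geometric} as the number of representations $n=\sum_i k_i\hp{i}$, bound that count by a product of per-coordinate choice counts, take logarithms, substitute $J=\log_t N+\Oh{1}$ from Lemma~\ref{lem:bound-index-summands}, and absorb the extra factor $\Oh{\log N}$ from summing over $j$ for numerator and denominator. The only cosmetic differences are that you tighten by one factor (using that $k_j$ is determined by the others) and take logarithms termwise rather than bounding the product first; both lead to the identical main term $\frac{(\log N)^2}{2(\log 2)(\log t)}$.
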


\begin{proof}
  We start proving the claimed result for~\eqref{eq:product-geometric}
  and postpone handling numerator and denominator
  of~\eqref{eq:power-series} to the end of this proof.

  Each factor of~\eqref{eq:product-geometric} is a geometric series
  whose coefficients are either $0$ or $1$ and whose constant coefficient is $1$.
  In particular, these coefficients are nonnegative.
  Therefore,
  it suffices to show the result for $j=J$.

  As the coefficients are either $0$ or $1$, the $n$th
  coefficient of the product equals the cardinality of the set
  \begin{equation*}
    \setm[\bigg]{(a_1,a_2,\dots,a_J) \in \N_0^J}{\sum_{i=1}^J a_i \hp{i} = n}.
  \end{equation*}
  By using the crude estimate~$a_i \leq n/\hp{i}$, we see that we have at most
  $2N/\hp{i}$ choices for $a_i$ because $n<N$ and $\hp{i}<N$ by construction. Thus
  we can bound the
  cardinality of the set above by
  \begin{equation*}
    \frac{2^JN^J}{\hp{1} \hp{2} \cdots \hp{J}}
    \leq \frac{2^JN^J}{1 \cdot t \cdot t^2 \cdots t^{J-1}}
    = \frac{2^JN^J}{t^{(J-1)J/2}}.
  \end{equation*}
  We use $J=\log_tN + \Oh{1}$ of
  Lemma~\ref{lem:bound-index-summands} to obtain
  \begin{align*}
    \frac{2^JN^J}{t^{(J-1)J/2}}
    &= \fexp[\Big]{J (\log N) - J^2 \frac{\log t}{2} + J \frac{\log t}{2}
      + J (\log 2)} \\
    &\leq \fexp[\Big]{\frac{(\log N)^2}{\log t}
      - \frac{(\log N)^2}{2\log t} + \Oh{\log N}}
  \end{align*}
  from which follows that the $n$th coefficient
  of~\eqref{eq:product-geometric} is bounded by
  \begin{equation}\label{eq:coefficients-bound}
    \fexp[\Big]{\frac{(\log N)^2}{2\log t} + \Oh{\log N}}.
  \end{equation}
  The result in terms of bit size follows by taking the logarithm.

  Numerator and denominator are sums where
  $J$ summands are added up (or subtracted). This corresponds
  to an additional factor~$J$ in the bound~\eqref{eq:coefficients-bound}
  or an additional summand $\log_2J$ in the
  formula~\eqref{eq:coefficients-bound-bits}, respectively. As $J=\Oh{\log_tN}$
  by Lemma~\ref{lem:bound-index-summands}, this
  is absorbed by the error term, so the same formula holds.
\end{proof}

\section{Proof of Theorem~\ref{thm:extract-coeff}}
\label{sec:proofs}

  We start with an overview of our strategy.
  For computing the first~$N$ coefficients of the generating function~$H(q)$
  (see~\eqref{eq:power-series}), we only need the summands of
  the numerator and the denominator with $j<J$ according to
  Lemma~\ref{lem:bound-index-summands}.

  First, consider the denominator of $H(q)$. We
  compute the products
  \begin{equation*}
    \prod_{i=1}^j \frac{q^{\hp{i}}}{1-q^{\hp{i}}},
  \end{equation*}
  iteratively by expanding the $J$ different terms
  $q^{\hp{i}} / (1-q^{\hp{i}})$ as geometric series and perform power
  series multiplications. After each multiplication, we accumulate the
  result by using one power series addition.

  We deal with the numerator in the same fashion. However, by
  performing the computation of numerator and denominator
  simultaneously, the above products only need to be evaluated once.

  Finally, to obtain the first $N$ coefficients of $H(q)$, we need one power
  series division of numerator and denominator.

  Pseudocode for our algorithm is given in Algorithm~\ref{algorithm:pseudo-code};
  an efficient implementation using the FLINT library is
  presented in Appendix~\ref{sec:code}. The actual analysis of this
  algorithm is done by counting the operations needed, in particular
  the power series operations, and providing bounds for the bit sizes of
  the variables.

\begin{algorithm}
  \begin{Verbatim}[commandchars=\\\{\}]
    \PY{n}{j} \PY{o}{=} \PY{l+m+mi}{0}\PY{p}{;} \PY{n}{hp\PYZus{}j} \PY{o}{=} \PY{l+m+mi}{0}\PY{p}{;} \PY{n}{sigma\PYZus{}j} \PY{o}{=} \PY{l+m+mi}{0}\PY{p}{;} \PY{n}{sign} \PY{o}{=} \PY{l+m+mi}{1}

    \PY{n}{numerator} \PY{o}{=} \PY{l+m+mi}{1}\PY{p}{;} \PY{n}{denominator} \PY{o}{=} \PY{l+m+mi}{1}\PY{p}{;} \PY{n}{coefficient\PYZus{}product} \PY{o}{=} \PY{l+m+mi}{1}

    \PY{k}{while} \PY{n+nb+bp}{True}\PY{p}{:}
        \PY{n}{j} \PY{o}{+}\PY{o}{=} \PY{l+m+mi}{1}\PY{p}{;} \PY{n}{hp\PYZus{}j} \PY{o}{=} \PY{l+m+mi}{1} \PY{o}{+} \PY{n}{t}\PY{o}{*}\PY{n}{hp\PYZus{}j}\PY{p}{;} \PY{n}{sigma\PYZus{}j} \PY{o}{+}\PY{o}{=} \PY{n}{hp\PYZus{}j}\PY{p}{;} \PY{n}{sign} \PY{o}{=} \PY{o}{\PYZhy{}}\PY{n}{sign}
        \PY{c+c1}{\PYZsh{} hp\PYZus{}j = [j]; sigma\PYZus{}j = sum\PYZus{}\PYZob{}i=1\PYZcb{}\PYZca{}j [j]; sign = (\PYZhy{}1)\PYZca{}j}

        \PY{k}{if} \PY{n}{sigma\PYZus{}j} \PY{o}{\PYZgt{}} \PY{n}{N}\PY{p}{:}
            \PY{k}{break}

        \PY{n}{new\PYZus{}factor} \PY{o}{=} \PY{l+m+mi}{0}
        \PY{k}{for} \PY{n}{i} \PY{o+ow}{in} \PY{n+nb}{range}\PY{p}{(}\PY{n}{hp\PYZus{}j}\PY{p}{,} \PY{n}{hp\PYZus{}j}\PY{p}{,} \PY{n}{N}\PY{p}{)}\PY{p}{:}
            \PY{n}{new\PYZus{}factor}\PY{p}{[}\PY{n}{i}\PY{p}{]} \PY{o}{=} \PY{l+m+mi}{1}
        \PY{c+c1}{\PYZsh{} new\PYZus{}factor = q\PYZca{}[j]/(1\PYZhy{}q\PYZca{}[j])}

        \PY{n}{coefficient\PYZus{}product} \PY{o}{*}\PY{o}{=} \PY{n}{new\PYZus{}factor}
        \PY{c+c1}{\PYZsh{} coefficient\PYZus{}product = prod\PYZus{}\PYZob{}i=1\PYZcb{}\PYZca{}j q\PYZca{}[i]/(1\PYZhy{}q\PYZca{}[i])}

        \PY{n}{denominator} \PY{o}{+}\PY{o}{=} \PY{n}{sign} \PY{o}{*} \PY{n}{coefficient\PYZus{}product}
        \PY{n}{numerator} \PY{o}{+}\PY{o}{=} \PY{n}{sign} \PY{o}{*} \PY{n}{coefficient\PYZus{}product}\PY{o}{.}\PY{n}{shifted}\PY{p}{(}\PY{n}{hp\PYZus{}j}\PY{p}{)}
        \PY{c+c1}{\PYZsh{} numerator up to summand j; denominator up to summand j}

    \PY{n}{result} \PY{o}{=} \PY{n}{numerator} \PY{o}{/} \PY{n}{denominator}
\end{Verbatim}

  \caption{Pseudocode}\label{algorithm:pseudo-code}
\end{algorithm}

Let us come to the actual proof.

\begin{proof}[Proof of Theorem~\ref{thm:extract-coeff}]

  We analyse the code of Algorithm~\ref{algorithm:pseudo-code}; see Appendix~\ref{sec:code}
  for the details. It starts by
  initialising variables (memory allocation and initial values) for
  the power series operations, which contributes $\Oh{1}\bfS$. Further
  initialisation is done by $\Oh{1}\bfO$ operations.

  For computing the first~$N$ coefficients of $H(q)$
  (see~\eqref{eq:power-series}), we only need the summands of
  numerator and denominator with $j<J=\log_tN+\Oh{1}$ according to
  Lemma~\ref{lem:bound-index-summands}. Speaking in terms of our
  computer programme, our outer loop needs $J$~passes. We now describe
  what happens in each of these passes; the final cost needs then to be
  multiplied by~$J$.

  Suppose we are in step~$j$.
  After some update of auxiliary variables (needing $\Oh{1}\bfO$~operations), we
  compute the product
  \begin{equation*}
    \prod_{i=1}^j \frac{q^{\hp{i}}}{1-q^{\hp{i}}},
  \end{equation*}
  out of the product with factors up to index~$i=j-1$. Expanding
  $q^{\hp{j}} / (1-q^{\hp{j}})$ as geometric series contributes at
  most~$\bfS$ and performing a power series multiplication
  contributes~$\bfM$ and additionally one swap~$\Oh{1}\bfS$. For
  obtaining the number of bit operations, we need estimates of the
  coefficients appearing in the multiplication.
  Lemma~\ref{lemma:coefficients-product-geometric} bounds their value by
  \begin{equation}\label{eq:bits-multiplication}
    \frac{(\log N)^2}{2(\log 2)(\log t)} + \Oh{\log N}
  \end{equation}
  bits. Therefore each of our power series multiplications~$\bfM$ needs
  \begin{equation*}
    N \log N\, 2^{\Oh{\log^* N}}
    \cdot (\log N)^2 \log\log N
    = N (\log N)^3 \log\log N \, 2^{\Oh{\log^* N}}
  \end{equation*}
  bit operations by the results of
  Fürer~\cite{Fuerer:2007:faster-integer-mult,
    Fuerer:2009:faster-integer-mult} and
  Harvey and van der Hoeven~\cite{Harvey-VanDerHoeven:2019:integer-mult-nlogn}; see also
  Section~\ref{sec:cost-add-mult}.
  
  After each multiplication, we accumulate the results for numerator
  and denominator by using one power series addition~$\bfA$ for each
  of the two. For the numerator, we additionally need $\Oh{1}\bfS$ operations for the
  multiplication by $q^{\hp{j}}$ performed by shifting.
  Concerning bit operations, we use the bound of the coefficients
  for numerator and denominator provided by
  Lemma~\ref{lemma:coefficients-product-geometric}.
  In terms of bit size, this leads to
  the number of bits given in~\eqref{eq:bits-multiplication}.
  Therefore a power series addition needs
  \begin{equation*}
    \Oh[\big]{N} \Oh[\big]{(\log N)^2} = \Oh[\big]{N (\log N)^2}
  \end{equation*}
  bit operations.

  In total, we end up with
  \begin{equation*}
    J\bigl(\bfM + 2\bfA + \Oh{1}\bfS + \Oh{1}\bfO\bigr)
  \end{equation*}
  operations to evaluate the outer loop; these operations translate to
  \begin{equation*}
    N (\log N)^2 \, 2^{\Oh{\log^* N}}    
  \end{equation*}
  operations in the ring of integers and to
  \begin{equation*}
     N (\log N)^4 \log\log N \, 2^{\Oh{\log^* N}}
  \end{equation*}
  bit operations.

  We are now ready to collect all costs for proving the first part of
  Theorem~\ref{thm:extract-coeff}.
  Additionally to the above,
  we divide the numerator by the denominator and
  need one power series division~$\bfD$. The clean-up accounts to
  $\Oh{1}\bfS$. This yields~\eqref{eq:cost-ps-operations}.

  Using the Newton--Raphson-method and Fürer's algorithm (see
  Section~\ref{sec:cost-add-mult} and Table~\ref{tab:cost-ops})
  a power series division~$\bfD$ results in
  \begin{equation*}
    N \log N \, 2^{\Oh{\log^* N}}
  \end{equation*}
  operations in the ring. Its operands\footnote{The actual bit size
    during the division is
    $\frac{N(\log N)^2}{2(\log 2 )(\log t)} + \Oh{N\log N}$; see the end
    of Section~\ref{sec:cost-add-mult} for details.} have bit size
  \begin{equation*}
    \frac{(\log N)^2}{2(\log 2 )(\log t)} + \Oh{\log N}
  \end{equation*}
  which results in
  \begin{equation*}
    N^2(\log N)^4 2^{\Oh{\log^* N}}
  \end{equation*}
  bit operations for our computations.
  
  We note that the number of bit operations of a power series
  operation~$\bfS$ is linear in $N$ as the coefficients are bounded
  and that $\bfO$ is an operation on numbers with $\Oh{\log N}$ bits.
  The error term includes all these. Collecting all
  bit operation results gives the upper bound~\eqref{eq:cost-bit-operations}.
\end{proof}

\section{Remarks}
\label{sec:remarks}

In this last section, we provide some remarks related to the
above proof and coefficient extraction algorithm.

\begin{remark}
In the proof above, we have seen that the cost (bit operations)
of the power series
division is asymptotically roughly (not taking logarithms and smaller
factors into account) a factor~$N$ larger than the cost for computing
numerator and denominator, and all the overhead cost.

Moreover, only focusing on the computation of numerator and
denominator, the costs (again bit operations) for computing these two
are asymptotically dominated by power series
multiplication, albeit only by roughly (again not taking into account
logarithmically smaller factors) a factor~$\log N$ compared to addition and
other power series operations.

Note that when only considering operations in the integer ring, then
the multiplications performed in the evaluation of numerator and
denominator take the asymptotically leading role by
$N (\log N)^2 \, 2^{\Oh{\log^* N}}$ operations compared to
$N (\log N) \, 2^{\Oh{\log^* N}}$ ring operations of the power series
division.
\end{remark}

At the end of this article we make a short remark on the memory requirements
for the presented coefficient extraction algorithm.

\begin{remark}\label{rem:space}
  Our algorithm needs $\Oh{N}$ units of memory---a unit stands for the
  memory requirements of storing a number bounded by~$\rho^N$---plus the
  memory needed for the power series multiplication and
  division.\footnote{We have been unable to find a reference for the memory
    requirements of, for example, the Schönhage--Strassen-algorithm. It seems
    that the \href{http://gmplib.org/}{\emph{GNU Multiple Precision Arithmetic
        Library (GMP)}} can do this with $12N$ units of memory;
    see~\cite{Granlund:gmp:memory-ssa} for a comment of one of its authors.}
  The above means that we can bound the memory requirements by $\Oh{N^2}$ bits.
\end{remark}

\renewcommand{\MR}[1]{}
\bibliographystyle{bibstyle/amsplainurl}
\bibliography{bib/cheub}

\providecommand{\Submitted}{Submitted} \providecommand{\availableat}{ available
  at } \providecommand{\alsoavailableat}{ also available at }
  \providecommand{\evavailableat}{earlier version available at }
  \providecommand{\toappearin}{To appear in } \providecommand{\toappear}{to
  appear} \providecommand{\inpreparation}{in preparation}
  \providecommand{\doi}[1]{\href{http://dx.doi.org/#1}{\path{doi:#1}}}
  \providecommand{\lowercaseforams}{}
  \providecommand{\etc}{\emph{etc.}}\def\cprime{$'$}
\providecommand{\bysame}{\leavevmode\hbox to3em{\hrulefill}\thinspace}
\providecommand{\MR}{\relax\ifhmode\unskip\space\fi MR }
\providecommand{\MRhref}[2]{%
  \href{http://www.ams.org/mathscinet-getitem?mr=#1}{#2}
}
\providecommand{\href}[2]{#2}
\begin{thebibliography}{10}

\bibitem{Borodin-Munro:1975:comp-complexity}
Allan Borodin and Ian Munro, \emph{The computational complexity of algebraic
  and numeric problems}, American Elsevier Publishing Co., Inc., New
  York-London-Amsterdam, 1975, Elsevier Computer Science Library; Theory of
  Computation Series, No. 1. \MR{0468309}

\bibitem{Borwein-Borwein-Bailey:1989}
Jonathan~M. Borwein, Peter~B. Borwein, and David~H. Bailey, \emph{Ramanujan,
  modular equations, and approximations to pi, or how to compute one billion
  digits of pi}, Amer. Math. Monthly \textbf{96} (1989), 201--219.

\bibitem{Brigham:1974:fft}
E.~Oran Brigham, \emph{The {F}ast {F}ourier transform}, Prentice-Hall,
  Englewood Cliffs, NJ, 1974.

\bibitem{Cook:1966:phd:min-comp-time}
Stephen~A. Cook, \emph{On the minimum computation time of functions}, Ph.D.
  thesis, Harvard University, 1966.

\bibitem{Cooley-Tukey:1965}
James~William Cooley and Tukey~John Wilder, \emph{An algorithm for the machine
  calculation of complex {F}ourier series}, Math. Comput. \textbf{19} (1965),
  297--301.

\bibitem{Cormen-Leiserson-Rivest-Stein:2001}
Thomas~H. Cormen, Charles~E. Leiserson, Ronald~L. Rivest, and Clifford Stein,
  \emph{Introduction to algorithms}, second ed., The MIT Press, 2001.

\bibitem{De-Kurur-Saha-Saptharishi:2008:fast-mult}
Anindya De, Piyush~P. Kurur, Chandan Saha, and Ramprasad Saptharishi,
  \href{http://dx.doi.org/10.1145/1374376.1374447}{\emph{Fast integer
  multiplication using modular arithmetic}}, {STOC}'08: Proceedings of the
  fortieth annual {ACM} symposium on Theory of computing, ACM, New York, 2008,
  pp.~499--505.

\bibitem{De-Kurur-Saha-Saptharishi:2013:fast-mult}
Anindya De, Piyush~P. Kurur, Chandan Saha, and Ramprasad Saptharishi,
  \href{http://dx.doi.org/10.1137/100811167}{\emph{Fast integer multiplication
  using modular arithmetic}}, SIAM J. Comput. \textbf{42} (2013), no.~2,
  685--699.

\bibitem{Elsholtz-Heuberger-Prodinger:2013:huffm}
Christian Elsholtz, Clemens Heuberger, and Helmut Prodinger,
  \href{http://dx.doi.org/10.1109/TIT.2012.2226560}{\emph{The number of
  {H}uffman codes, compact trees, and sums of unit fractions}}, IEEE Trans.
  Inf. Theory \textbf{59} (2013), 1065--1075. \MR{3015716}

\bibitem{Even-Lempel:1972:gener}
Shimon Even and Abraham Lempel, \emph{Generation and enumeration of all
  solutions of the characteristic sum condition}, Information and Control
  \textbf{21} (1972), 476--482. \MR{0349273 (50 \#1767)}

\bibitem{Flajolet-Prodinger:1987:level}
Philippe Flajolet and Helmut Prodinger,
  \href{http://dx.doi.org/10.1016/0012-365X(87)90137-3}{\emph{Level number
  sequences for trees}}, Discrete Math. \textbf{65} (1987), no.~2, 149--156.
  \MR{893076 (88e:05030)}

\bibitem{flint:2015:2.5.2}
William~B. Hart, Fredrik Johansson, and Sebastian Pancratz, \emph{{FLINT}:
  {F}ast {L}ibrary for {N}umber {T}heory}, 2015, Version 2.5.2,
  \url{http://flintlib.org}.

\bibitem{Fuerer:2007:faster-integer-mult}
Martin F{\"u}rer, \href{http://dx.doi.org/10.1145/1250790.1250800}{\emph{Faster
  integer multiplication}}, S{TOC}'07---{P}roceedings of the 39th {A}nnual
  {ACM} {S}ymposium on {T}heory of {C}omputing, ACM, New York, 2007,
  pp.~57--66. \MR{2402428 (2009e:68124)}

\bibitem{Fuerer:2009:faster-integer-mult}
\bysame, \href{http://dx.doi.org/10.1137/070711761}{\emph{Faster integer
  multiplication}}, SIAM J. Comput. \textbf{39} (2009), no.~3, 979--1005.
  \MR{2538847 (2011b:68296)}

\bibitem{Granlund:gmp:memory-ssa}
Torbj{\"o}rn Granlund, \emph{{S}ch{\"o}nhage-{S}trassen}, gmp-discuss mailing
  list, August 2010,
  \url{http://gmplib.org/list-archives/gmp-discuss/2010-August/004277.html}.

\bibitem{Hart:2010:FLINT-intro}
William~B. Hart, \emph{Fast library for number theory: an introduction},
  Mathematical software---{ICMS} 2010, Lecture Notes in Comput. Sci., vol.
  6327, Springer, Berlin, 2010, pp.~88--91. \MR{3663175}

\bibitem{Harvey-VanDerHoeven:2019:integer-mult-nlogn}
David Harvey and Joris van~der Hoeven,
  \href{http://dx.doi.org/10.4007/annals.2021.193.2.4}{\emph{Integer
  multiplication in time {$O(n \log n)$}}}, Ann. of Math. (2) \textbf{193}
  (2021), no.~2, 563--617. \MR{4224716}

\bibitem{Heuberger-Krenn-Wagner:2013:analy-param}
Clemens Heuberger, Daniel Krenn, and Stephan Wagner,
  \href{http://dx.doi.org/10.1137/1.9781611973037.5}{\emph{Analysis of
  parameters of trees corresponding to {H}uffman codes and sums of unit
  fractions}}, Proceedings of the Meeting on Analytic Algorithmics \&
  Combinatorics (ANALCO), New Orleans, Louisiana, USA, January 6, 2013
  (Philadelphia PA), SIAM, 2013, pp.~33--42.

\bibitem{Heuberger-Krenn-Wagner:ta:analy-param-full}
\bysame, \href{http://dx.doi.org/10.1137/15M1017107}{\emph{Canonical trees,
  compact prefix-free codes and sums of unit fractions: A probabilistic
  analysis}}, SIAM J. Discrete Math. \textbf{29} (2015), no.~3, 1600--1653.
  \MR{3391977}

\bibitem{Hoffman-Johnson-Wilson:2005:gener-huffm}
Dean Hoffman, Peter Johnson, and Nadine Wilson,
  \href{http://dx.doi.org/10.1016/j.jalgor.2004.04.003}{\emph{Generating
  {H}uffman sequences}}, J. Algorithms \textbf{54} (2005), no.~1, 115--121.
  \MR{2108425 (2005m:94020)}

\bibitem{Karatsuba:1995:complexity-comp:survey}
Anatolii~Alexeevitch Karatsuba, \emph{The complexity of computations},
  Proceedings of the Steklov Institute of Mathematics, vol. 211, 1995,
  pp.~169--183.

\bibitem{Karatsuba-Ofman:1962:multipl}
Anatolii~Alexeevitch Karatsuba and Ofman~Yuri Petrovich, \emph{Multiplication
  of many-digital numbers by automatic computers}, Proceedings of the USSR
  Academy of Sciences, vol. 145, 1962, pp.~293--294.

\bibitem{Karatsuba-Ofman:1963:multipl:translation}
\bysame, \emph{Multiplication of many-digital numbers by automatic computers},
  Doklady Physics \textbf{7} (1963), 595--596.

\bibitem{Khosravifard-Esmaeili-Saidi-Gulliver:2003:tree-compact-codes}
Mohammadali Khosravifard, Morteza Esmaeili, Hossein Saidi, and T.~Aaron
  Gulliver, \emph{A tree based algorithm for generating all possible binary
  compact codes with $n$ codewords}, IEICE Trans. Fundamentals (2003), no.~10,
  2510--2516.

\bibitem{Klazar:2018:what-is-answer}
Martin Klazar, \href{https://arxiv.org/abs/1808.08449}{\emph{What is an answer?
  --- remarks, results and problems on {PIO} formulas in combinatorial
  enumeration, part {I}}}, arXiv:1808.08449 [math.CO].

\bibitem{Knuth:1998:Art:2}
Donald~E. Knuth, \emph{Seminumerical algorithms}, third ed., The Art of
  Computer Programming, vol.~2, Addison-Wesley, 1998.

\bibitem{Komlos-Moser-Nemetz:1984}
J.~Koml{\'o}s, W.~Moser, and T.~Nemetz, \emph{On the asymptotic number of
  prefix codes}, Mitt. Math. Sem. Giessen (1984), no.~165, 35--48. \MR{745868
  (86a:94009)}

\bibitem{Narimani-Khosravifard:2008:supertree-compact-codes}
Hajar Narimani and Mohammadali Khosravifard, \emph{The supertree of the compact
  codes}, Proc. Int. Symposium on Telecommunications, 2008, pp.~649--655.

\bibitem{Niyaoui-Reda:2016:alg-gen-binary-huffman-codes}
Othmane Niyaoui and Oussama~Mohamed Reda, \emph{A new algorithm for generating
  all binary {H}uffman codes based on path-length extensions}, International
  Journal of Applied Engineering Research \textbf{11} (2016), no.~21,
  10618--10623.

\bibitem{OEIS:2013}
\emph{The {O}n-{L}ine {E}ncyclopedia of {I}nteger {S}equences},
  \url{http://oeis.org}, 2013.

\bibitem{SageMath:2018:8.3}
{The SageMath Developers}, \emph{{SageMath} {M}athematics {S}oftware ({V}ersion
  8.3)}, 2018, \url{http://www.sagemath.org}.

\bibitem{Schoenhage-Strassen:1971:schnelle-mult}
Arnold Sch{\"o}nhage and Volker Strassen, \emph{Schnelle {M}ultiplikation
  gro\ss er {Z}ahlen}, Computing (Arch. Elektron. Rechnen) \textbf{7} (1971),
  281--292. \MR{0292344 (45 \#1431)}

\bibitem{Gathen-Gerhard:2003:ModernComputerAlgebra}
Joachim von~zur Gathen and J{\"u}rgen Gerhard, \emph{Modern computer algebra},
  second ed., Cambridge University Press, 2003.

\end{thebibliography}

\appendix
\clearpage

\section{Code}
\label{sec:code}

Below are the relevant lines of a programme written in C for computing
the coefficients~$\f{g_t}{n}$ with $n<N$. The code can
be found at
\url{https://gitlab.com/dakrenn/count-nonequivalent-compact-huffman-codes}.
The programme
uses FLINT~\cite{flint:2015:2.5.2, Hart:2010:FLINT-intro}. Note that we do not
use aliasing of input and output arguments in multiplication because providing
our own auxiliary polynomial brings tiny performance improvements.

\begin{algorithm}
  {\tiny\begin{Verbatim}[commandchars=\\\{\}]
    \PY{n}{fmpz\PYZus{}poly\PYZus{}init}\PY{p}{(}\PY{n}{result}\PY{p}{)}\PY{p}{;}
    \PY{n}{fmpz\PYZus{}poly\PYZus{}init2}\PY{p}{(}\PY{n}{coefficient\PYZus{}product}\PY{p}{,} \PY{n}{N}\PY{p}{)}\PY{p}{;}
    \PY{n}{fmpz\PYZus{}poly\PYZus{}init2}\PY{p}{(}\PY{n}{coefficient\PYZus{}product\PYZus{}shifted}\PY{p}{,} \PY{l+m+mi}{2}\PY{o}{*}\PY{n}{N}\PY{p}{)}\PY{p}{;}
    \PY{n}{fmpz\PYZus{}poly\PYZus{}init2}\PY{p}{(}\PY{n}{new\PYZus{}factor}\PY{p}{,} \PY{n}{N}\PY{p}{)}\PY{p}{;}
    \PY{n}{fmpz\PYZus{}poly\PYZus{}init2}\PY{p}{(}\PY{n}{numerator}\PY{p}{,} \PY{n}{N}\PY{p}{)}\PY{p}{;}
    \PY{n}{fmpz\PYZus{}poly\PYZus{}init2}\PY{p}{(}\PY{n}{denominator}\PY{p}{,} \PY{n}{N}\PY{p}{)}\PY{p}{;}
    \PY{n}{fmpz\PYZus{}poly\PYZus{}init2}\PY{p}{(}\PY{n}{aux}\PY{p}{,} \PY{n}{N}\PY{p}{)}\PY{p}{;}

    \PY{n}{fmpz\PYZus{}poly\PYZus{}set\PYZus{}ui}\PY{p}{(}\PY{n}{numerator}\PY{p}{,} \PY{l+m+mi}{1}\PY{p}{)}\PY{p}{;}
    \PY{n}{fmpz\PYZus{}poly\PYZus{}set\PYZus{}ui}\PY{p}{(}\PY{n}{denominator}\PY{p}{,} \PY{l+m+mi}{1}\PY{p}{)}\PY{p}{;}
    \PY{n}{fmpz\PYZus{}poly\PYZus{}set\PYZus{}ui}\PY{p}{(}\PY{n}{coefficient\PYZus{}product}\PY{p}{,} \PY{l+m+mi}{1}\PY{p}{)}\PY{p}{;}

    \PY{n}{j}\PY{o}{=}\PY{l+m+mi}{0}\PY{p}{;}
    \PY{n}{hp\PYZus{}j}\PY{o}{=}\PY{l+m+mi}{0}\PY{p}{;}
    \PY{n}{sigma\PYZus{}j}\PY{o}{=}\PY{l+m+mi}{0}\PY{p}{;}
    \PY{n}{sign}\PY{o}{=}\PY{l+m+mi}{1}\PY{p}{;}

    \PY{k}{while}\PY{p}{(}\PY{o}{\PYZhy{}}\PY{l+m+mi}{1}\PY{p}{)} \PY{p}{\PYZob{}}
        \PY{n}{j}\PY{o}{+}\PY{o}{+}\PY{p}{;}
        \PY{n}{hp\PYZus{}j}\PY{o}{=}\PY{l+m+mi}{1}\PY{o}{+}\PY{n}{t}\PY{o}{*}\PY{n}{hp\PYZus{}j}\PY{p}{;}
        \PY{n}{sigma\PYZus{}j}\PY{o}{+}\PY{o}{=}\PY{n}{hp\PYZus{}j}\PY{p}{;}
        \PY{n}{sign}\PY{o}{=}\PY{o}{\PYZhy{}}\PY{n}{sign}\PY{p}{;}
        \PY{c+cm}{/* hp\PYZus{}j=[j]; sigma\PYZus{}j=sum\PYZus{}\PYZob{}i=1\PYZcb{}\PYZca{}j [j]; sign=(\PYZhy{}1)\PYZca{}j */}

        \PY{k}{if}\PY{p}{(}\PY{n}{sigma\PYZus{}j}\PY{o}{\PYZgt{}}\PY{o}{=}\PY{n}{N}\PY{p}{)} \PY{p}{\PYZob{}}
            \PY{k}{break}\PY{p}{;}
        \PY{p}{\PYZcb{}}

        \PY{k}{for}\PY{p}{(}\PY{n}{i}\PY{o}{=}\PY{n}{hp\PYZus{}j}\PY{p}{;} \PY{n}{i}\PY{o}{\PYZlt{}}\PY{n}{N}\PY{p}{;} \PY{n}{i}\PY{o}{+}\PY{o}{=}\PY{n}{hp\PYZus{}j}\PY{p}{)} \PY{p}{\PYZob{}}
            \PY{n}{fmpz\PYZus{}poly\PYZus{}set\PYZus{}coeff\PYZus{}ui}\PY{p}{(}\PY{n}{new\PYZus{}factor}\PY{p}{,} \PY{n}{i}\PY{p}{,} \PY{l+m+mi}{1}\PY{p}{)}\PY{p}{;}
        \PY{p}{\PYZcb{}}
        \PY{c+cm}{/* new\PYZus{}factor = q\PYZca{}[j]/(1\PYZhy{}q\PYZca{}[j]) */}

        \PY{n}{fmpz\PYZus{}poly\PYZus{}mullow}\PY{p}{(}\PY{n}{aux}\PY{p}{,} \PY{n}{coefficient\PYZus{}product}\PY{p}{,} \PY{n}{new\PYZus{}factor}\PY{p}{,} \PY{n}{N}\PY{p}{)}\PY{p}{;}
        \PY{n}{fmpz\PYZus{}poly\PYZus{}swap}\PY{p}{(}\PY{n}{coefficient\PYZus{}product}\PY{p}{,} \PY{n}{aux}\PY{p}{)}\PY{p}{;}
        \PY{c+cm}{/* coefficient\PYZus{}product=prod\PYZus{}\PYZob{}i=1\PYZcb{}\PYZca{}j q\PYZca{}[i]/(1\PYZhy{}q\PYZca{}[i]) */}

        \PY{k}{for}\PY{p}{(}\PY{n}{i}\PY{o}{=}\PY{n}{hp\PYZus{}j}\PY{p}{;} \PY{n}{i}\PY{o}{\PYZlt{}}\PY{n}{N}\PY{p}{;} \PY{n}{i}\PY{o}{+}\PY{o}{=}\PY{n}{hp\PYZus{}j}\PY{p}{)} \PY{p}{\PYZob{}}
            \PY{n}{fmpz\PYZus{}poly\PYZus{}set\PYZus{}coeff\PYZus{}ui}\PY{p}{(}\PY{n}{new\PYZus{}factor}\PY{p}{,} \PY{n}{i}\PY{p}{,} \PY{l+m+mi}{0}\PY{p}{)}\PY{p}{;}
        \PY{p}{\PYZcb{}}
        \PY{c+cm}{/* new\PYZus{}factor = 0 */}

        \PY{k}{if}\PY{p}{(}\PY{n}{sign}\PY{o}{=}\PY{o}{=}\PY{l+m+mi}{1}\PY{p}{)} \PY{p}{\PYZob{}}
            \PY{n}{fmpz\PYZus{}poly\PYZus{}add}\PY{p}{(}\PY{n}{denominator}\PY{p}{,} \PY{n}{denominator}\PY{p}{,} \PY{n}{coefficient\PYZus{}product}\PY{p}{)}\PY{p}{;}

            \PY{k}{if}\PY{p}{(}\PY{n}{hp\PYZus{}j}\PY{o}{+}\PY{n}{sigma\PYZus{}j}\PY{o}{\PYZlt{}}\PY{n}{N}\PY{p}{)} \PY{p}{\PYZob{}}
                \PY{n}{fmpz\PYZus{}poly\PYZus{}set}\PY{p}{(}\PY{n}{coefficient\PYZus{}product\PYZus{}shifted}\PY{p}{,} \PY{n}{coefficient\PYZus{}product}\PY{p}{)}\PY{p}{;}
                \PY{n}{fmpz\PYZus{}poly\PYZus{}truncate}\PY{p}{(}\PY{n}{coefficient\PYZus{}product\PYZus{}shifted}\PY{p}{,} \PY{n}{N}\PY{o}{\PYZhy{}}\PY{n}{hp\PYZus{}j}\PY{p}{)}\PY{p}{;}
                \PY{n}{fmpz\PYZus{}poly\PYZus{}shift\PYZus{}left}\PY{p}{(}\PY{n}{coefficient\PYZus{}product\PYZus{}shifted}\PY{p}{,} \PY{n}{coefficient\PYZus{}product\PYZus{}shifted}\PY{p}{,} \PY{n}{hp\PYZus{}j}\PY{p}{)}\PY{p}{;}
                \PY{n}{fmpz\PYZus{}poly\PYZus{}add}\PY{p}{(}\PY{n}{numerator}\PY{p}{,} \PY{n}{numerator}\PY{p}{,} \PY{n}{coefficient\PYZus{}product\PYZus{}shifted}\PY{p}{)}\PY{p}{;}
            \PY{p}{\PYZcb{}}
        \PY{p}{\PYZcb{}} \PY{k}{else} \PY{p}{\PYZob{}}
            \PY{n}{fmpz\PYZus{}poly\PYZus{}sub}\PY{p}{(}\PY{n}{denominator}\PY{p}{,} \PY{n}{denominator}\PY{p}{,} \PY{n}{coefficient\PYZus{}product}\PY{p}{)}\PY{p}{;}

            \PY{k}{if}\PY{p}{(}\PY{n}{hp\PYZus{}j}\PY{o}{+}\PY{n}{sigma\PYZus{}j}\PY{o}{\PYZlt{}}\PY{n}{N}\PY{p}{)} \PY{p}{\PYZob{}}
                \PY{n}{fmpz\PYZus{}poly\PYZus{}set}\PY{p}{(}\PY{n}{coefficient\PYZus{}product\PYZus{}shifted}\PY{p}{,} \PY{n}{coefficient\PYZus{}product}\PY{p}{)}\PY{p}{;}
                \PY{n}{fmpz\PYZus{}poly\PYZus{}truncate}\PY{p}{(}\PY{n}{coefficient\PYZus{}product\PYZus{}shifted}\PY{p}{,} \PY{n}{N}\PY{o}{\PYZhy{}}\PY{n}{hp\PYZus{}j}\PY{p}{)}\PY{p}{;}
                \PY{n}{fmpz\PYZus{}poly\PYZus{}shift\PYZus{}left}\PY{p}{(}\PY{n}{coefficient\PYZus{}product\PYZus{}shifted}\PY{p}{,} \PY{n}{coefficient\PYZus{}product\PYZus{}shifted}\PY{p}{,} \PY{n}{hp\PYZus{}j}\PY{p}{)}\PY{p}{;}
                \PY{n}{fmpz\PYZus{}poly\PYZus{}sub}\PY{p}{(}\PY{n}{numerator}\PY{p}{,} \PY{n}{numerator}\PY{p}{,} \PY{n}{coefficient\PYZus{}product\PYZus{}shifted}\PY{p}{)}\PY{p}{;}
            \PY{p}{\PYZcb{}}
        \PY{p}{\PYZcb{}}
        \PY{c+cm}{/* numerator up to summand j; denominator up to summand j */}
    \PY{p}{\PYZcb{}}

    \PY{n}{fmpz\PYZus{}poly\PYZus{}clear}\PY{p}{(}\PY{n}{coefficient\PYZus{}product}\PY{p}{)}\PY{p}{;}
    \PY{n}{fmpz\PYZus{}poly\PYZus{}clear}\PY{p}{(}\PY{n}{coefficient\PYZus{}product\PYZus{}shifted}\PY{p}{)}\PY{p}{;}
    \PY{n}{fmpz\PYZus{}poly\PYZus{}clear}\PY{p}{(}\PY{n}{new\PYZus{}factor}\PY{p}{)}\PY{p}{;}
    \PY{n}{fmpz\PYZus{}poly\PYZus{}clear}\PY{p}{(}\PY{n}{aux}\PY{p}{)}\PY{p}{;}

    \PY{n}{fmpz\PYZus{}poly\PYZus{}div\PYZus{}series}\PY{p}{(}\PY{n}{result}\PY{p}{,} \PY{n}{numerator}\PY{p}{,} \PY{n}{denominator}\PY{p}{,} \PY{n}{N}\PY{p}{)}\PY{p}{;}

    \PY{n}{fmpz\PYZus{}poly\PYZus{}clear}\PY{p}{(}\PY{n}{numerator}\PY{p}{)}\PY{p}{;}
    \PY{n}{fmpz\PYZus{}poly\PYZus{}clear}\PY{p}{(}\PY{n}{denominator}\PY{p}{)}\PY{p}{;}
\end{Verbatim}
}
  \caption{C-code using FLINT}\label{algorithm:code-excerpt}
\end{algorithm}

\section{Timing}
\label{sec:timing}

The table below contains timings (in seconds) for computing the first
$N$ coefficients with $t=2$.
\begin{center}
  \vspace*{1ex}
  \begin{tabular}{rr|rrr}
    $t$ & $N$ & $t_{\mathrm{n\&d}}$ & $t_{\mathrm{division}}$ & $t_{\mathrm{total}}$ \\
    \hline
    2 & 256 & 0.000 & 0.001 & 0.001 \\
2 & 512 & 0.001 & 0.005 & 0.006 \\
2 & 1024 & 0.002 & 0.014 & 0.016 \\
2 & 2048 & 0.005 & 0.045 & 0.050 \\
2 & 4096 & 0.013 & 0.126 & 0.139 \\
2 & 8192 & 0.029 & 0.562 & 0.591 \\
2 & 16384 & 0.067 & 2.225 & 2.292 \\
2 & 32768 & 0.251 & 10.709 & 10.960 \\
2 & 65536 & 0.617 & 45.259 & 45.877 \\
2 & 131072 & 1.189 & 198.995 & 200.184 \\

  \end{tabular}
  \vspace*{1ex}
\end{center}
Here, $t_{\mathrm{n\&d}}$ is the time for generating numerator and
denominator, $t_{\mathrm{division}}$ for the one power series division
and $t_{\mathrm{total}} = t_{\mathrm{n\&d}} + t_{\mathrm{division}}$.

The benchmark was executed on an Intel(R) Xeon(R) CPU E5-2630 v3 at
2.40GHz. The limiting factor for our computations is the memory
requirement; it is the reason computing at most~$N=2^{17}=131072$
coefficients.

The timings in the table and the theoretical result of this article
fit together; we can see the $\Oh{N^{2+\eps}}$ running time of the
algorithm in our implementation.
\vspace*{2ex}
\end{document}